\theoremstyle{definition}
\newtheorem{theorem}[equation]{Theorem}
\newtheorem{corollary}[equation]{Corollary}
\newtheorem{lemma}[equation]{Lemma}
\newtheorem{note}[equation]{Note}
\newcommand{\R}{\mathbb R}
\newcommand{\N}{\mathbb N}
\newcommand{\Z}{\mathbb Z}
\newcommand{\fix}{\text{Fix\,}}
\begin{document}

\title{Every continuous action of a compact group on a uniquely arcwise connected continuum has a fixed point}
\author{Benjamin Vejnar\footnote{The work was supported by the grant GA\v CR 17-04197Y.}
\\ Charles University
\\Faculty of Mathematics and Physics
}

\date{\today}\maketitle

\begin{abstract}
We are dealing with the question whether every group or semigroup action (with some additional property) on a continuum (with some additional property) has a fixed point.
One of such results was given in 2009 by Shi and Sun. They proved that every nilpotent group action on a uniquely arcwise connected continuum has a fixed point. We are seeking for this type of results with e.g. commutative, compact or torsion groups and semigroups acting on dendrites, dendroids, $\lambda$-dendroids and uniquely arcwise connected continua. We prove that every continuous action of a compact or torsion group on a uniquely arcwise connected continuum has a fixed point. We also prove that every continuous action of a compact and commutative semigroup on a uniquely arcwise connected continuum has a fixed point.
\end{abstract}

2010 Mathematics Subject Classification. Primary 54H25, Secondary 37B45.

Keywords: Fixed point, group action, compact group, continuum, dendrite, dendroid, lambda dendroid, uniquely arcwise connected, tree-like.

\section{Introduction}

The most classical case in the fixed point theory deals with a continuous selfmap of a topological space. It was already proved in 1909 by Brouwer that every continuous selfmap of a nonempty compact convex subset of a Euclidean space has a fixed point. Topological spaces whose every continuous selfmap has a fixed point are said to have the \emph{fixed point property}. It is well known that dendrites, dendroids or even $\lambda$-dendroids have the fixed point property \cite{manka}. Uniquely arcwise connected continua have the fixed point property for homeomorphisms, but they lack the fixed point property with respect to all continuous selfmaps \cite{mohler}. Moreover even weakly chainable uniquely arcwise connected continua do not have the fixed point property \cite{sobolewski}.
The last positive result mentioned here was strengthen in the sense that uniquely arcwise connected continua have the fixed point property for arc preserving mappings \cite{fugatemohler}. To the contrary, by a famous result of Bellamy, there is a continuous selfmap of a tree-like continuum without fixed points \cite{Bellamy}.

In this paper we are dealing with a family of selfmaps of a continuum and we are seeking for conditions under which there is a common fixed point for all these mappings. Of course there is no chance to get such results unless the mappings are related somehow one with each other. Commutativity is one of such relations of two mappings. However, commutativity is stil not enough to find a common fixed point since there are two commuting mappings of a closed interval without a common fixed point as was proved in \cite{boyce} and independently in \cite{huneke}. 

If we consider a family of mappings on a space with a common fixed point it is clear that the semigroup generated by them has the same common fixed point. This is the reason why we are interested only in groups or semigroups. We can deal with the problem in a seemingly more general setting by considering group or semigroup continuous actions. This general approach needs to be rearranged if we want to handle with open and monotone mappings. In that case, when considering an action of a group or a semigroup $G$ on a space $X$ we identify the elements $g\in G$ with the selfmaps $x\mapsto gx$ of $X$.

In the second part of this paper we prove that compact group actions or torsion group actions always have a fixed point when acting on a uniquely arcwise connected continuum. This should be compared with a result in \cite{FugateMcLean} that every compact group action on a tree-like continuum has a fixed point. Also a similar result for torsion groups was obtained already in \cite{Monod} but only for actions on dendrites. Moreover we prove that a continuous action of a compact and commutative semigroup on a uniquely arcwise connected continuum has a fixed point too. This should be compared with the main result of \cite{shi} by which every nilpotent group action on a uniquely arcwise connected continuum has a fixed point.

The existence of fixed points was also studied for semigroups acting on continua by monotone or open mappings. The negative result in \cite{MohlerOversteegen} that there exists a monotone mapping and an open mapping on a uniquely arcwise connected continuum without a fixed point corresponds with the positive result in \cite{gray} that every commutative semigroup acting on a $\lambda$-dendroid by monotone mappings has a fixed point. It is an open problem posed in \cite{graysmith} whether every commutative semigroup has always a fixed point when acting on a dendrite by open mappings.

The problem of existence of a common fixed point is related to the question whether several mappings have a common value. In spite of that it is well known (and easy to prove) that two commuting mappings of the closed interval have a common value, it is not known whether two commuting selfmaps of a simple triod share such a point \cite[Question 1]{mcdowell}.

\section{Main results}
Our main results are formulated in Corollary \ref{compact}, Corollary \ref{torsion} and Theorem \ref{ccsemigroup}.
All the undefined notions like dendrites, dendroids, uniquely arcwise connected continua, $\lambda$-dendroids, tree-like continua, open or monotone maps can be found in the classical book \cite{nadler} focused on continuum theory.
We just note that every dendrite is a dendroid, every dendroid is a $\lambda$-dendroid, every dendroid is tree-like as well as uniquely arcwise connected and there is no relation between $\lambda$-dendroids and uniquely arcwise connected continua and between uniquely arcwise connected continua and tree-like continua.

For better orientation in the amount of results we are adding Table 1 with references which answer the question:
``Does there exist a fixed point under every action of a group or a semigroup on a continuum from the given class?" So for example one can read the entry ``+ \cite{isbell}" as: ``Every commutative group acting on a dendrite has a fixed point." The minus sign means the negation and the question mark means that up to our knowledge the problem is open.

\begin{table}[h]
\centerline{
\begin{tabular}{|| r  p{4.2cm} | l | l | l | l |l ||}
\hline\hline
{\bf } &  &      \rotatebox{90}{\parbox[b]{2cm}{\bf dendrite}}    &     \rotatebox{90}{\parbox[b]{2cm}{\bf dendroid}}     &      \rotatebox{90}{\parbox[b]{2cm}{\bf 1-arc.con}}     &      \rotatebox{90}{\parbox[b]{2cm}{\bf $\lambda$-dendroid}}  &  \rotatebox{90}{\parbox[b]{2cm}{\bf tree-like}}
\\
\hline\hline
\multirow{8}{*}{ \rotatebox{90}{group}}  &   $\Z$ & + & + & + \cite{mohler} & + & $-$ \cite{FMnote} \\
\cline{3-7}
   &  commutative  & + \cite{isbell} & + & + & ? & $-$ \\
\cline{3-7}
  & compact  & + & +  & + C~\ref{compact} & + &+ \cite{FugateMcLean} \\
\cline{3-7}
  & amenable  &  $-$ N~\ref{amen} & $-$ &  $-$ &  $-$ & $-$ \\
\cline{3-7}
  & solvable  &  $-$ \cite{shi} &  $-$ &  $-$ &  $-$ & $-$ \\
\cline{3-7}
  & nilpotent  & + & + & + \cite{shi} & ? & $-$ \\
\cline{3-7}
  & torsion  & + \cite{Monod} & + & + C~\ref{torsion} & ? & ?  \\
\hline
\multirow{ 10}{*}{ \rotatebox{90}{semigroup}}  & $\N$ & +  & + \cite{borsuk} &  $-$ \cite{young}  & + \cite{manka} & $-$ \cite{Bellamy}\\
\cline{3-7}
  & compact  &  $-$ N~\ref{compactsemigroup} &  $-$ &  $-$ &  $-$ & $-$ \\
\cline{3-7}
  & commutative  &  $-$ \cite{boyce}\cite{huneke} &  $-$ &  $-$ &  $-$ & $-$ \\
\cline{3-7}
  & compact commutative  & + & + & + T~\ref{ccsemigroup} & + & + T~\ref{ccsemigrouptree} \\
\cline{3-7}
  & $\N$, monotone mappings & + & + &  $-$ \cite{MohlerOversteegen} & + &  $-$ \\
\cline{3-7}
  & commutative, monotone mappings & + & + N~\ref{zobecneni} &  $-$ & + \cite{gray} & $-$ \\
\cline{3-7}
  & $\N$, open mappings & + & + &  $-$ \cite{MohlerOversteegen} & + & $-$ \\
\cline{3-7}
  & commutative, open mappings  & ? \cite{graysmith}  N~\ref{opentrees}& ?  &   $-$  & ?  & $-$ \\
\hline\hline
\end{tabular}
}
\caption{The existence of fixed points}
\end{table}

The following lemma is just a suitable reformulation of the fact that the identity is the only increasing homeomorphism of $[0,1]$ which is contained in a compact group of homeomorphisms of $[0,1]$ (compare with \cite[Lemma~1]{mitchell}).

\begin{lemma}\label{monseq}
Let $X$ be a uniquely arcwise connected continuum, $g$ a homeomorphism of $X$ and $a\in X$. Suppose that there is an arc $x,y$ such that the sequence of points $a, g(a), g^2(a), \dots$ is strictly monotone sequence in the arc $xy$ with respect to a natural order of $xy$.
Then $g$ is not contained in a compact subgroup of the group of all homeomorphisms.
\end{lemma}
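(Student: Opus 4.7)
My plan is to argue by contradiction: suppose $g$ lies in a compact subgroup $H$ of $\mathrm{Homeo}(X)$, equipped with its usual topology as a topological group (for instance, the one induced by the metric $d(f,f')=\sup_x d_X(f(x),f'(x))+\sup_x d_X(f^{-1}(x),f'^{-1}(x))$ for some compatible metric $d_X$ on the compact space $X$).

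First, I would extract the limit of the orbit on the arc. Every arc is homeomorphic to $[0,1]$ with the natural order, so the strictly monotone sequence $a,g(a),g^2(a),\dots$ in $xy$ converges to a unique point $p\in xy$, and any subsequence also converges to $p$. Because the sequence has infinitely many distinct terms all lying on one side of $a$ in the arc order, the limit satisfies $p\neq a$.

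Next, I would exploit compactness of $H$. Since $\{g^n:n\in\N\}\subset H$, some subsequence $g^{n_k}$ converges uniformly to an $h\in H$, and we may arrange $n_{k+1}>n_k$. By continuity of composition and inversion in the topological group,
\[
g^{n_{k+1}-n_k}=g^{n_{k+1}}\circ (g^{n_k})^{-1}\longrightarrow h\circ h^{-1}=\mathrm{id}_X.
\]
Setting $m_k=n_{k+1}-n_k\geq 1$, this yields $g^{m_k}\to\mathrm{id}_X$ uniformly, and in particular $g^{m_k}(a)\to a$. On the other hand, $g^{m_k}(a)$ is a subsequence of the original strictly monotone sequence (with index $\geq 1$), so it converges to $p$. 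Hence $p=a$, contradicting $p\neq a$.

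The only delicate step is the passage from a convergent power subsequence $g^{n_k}\to h$ to an ``almost identity'' subsequence $g^{m_k}\to\mathrm{id}_X$; this is the classical almost-periodicity argument for a relatively compact cyclic subgroup of a Hausdorff topological group, and it is the only place where the compactness of $H$ is used. Once the topology on $\mathrm{Homeo}(X)$ is fixed so that composition and inversion are continuous, everything else is routine, and the proof essentially reduces the problem on the continuum $X$ to the well-known statement about compact groups of homeomorphisms of $[0,1]$.
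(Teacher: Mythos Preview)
Your argument is essentially correct and takes a slightly different route from the paper. The paper picks a cluster point $h$ of the sequence $(g^{n})$ and checks directly that $h(a)=p$ and $h(p)=p$ (the latter via the double limit $\lim_i\lim_j g^{n_i+j}(a)$), contradicting injectivity of $h$. You instead pass from $g^{n_k}\to h$ to $g^{m_k}\to\mathrm{id}_X$ and then compare $g^{m_k}(a)\to a$ with the location of the orbit on the arc. Both proofs rest on the same compactness input; yours trades the double-limit computation for the group-theoretic step $g^{n_{k+1}}\circ(g^{n_k})^{-1}\to\mathrm{id}_X$, which is a clean way to avoid touching $h$ at the limit point $p$.

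There is one small gap you should close. You claim that $(g^{m_k}(a))$ ``is a subsequence of the original strictly monotone sequence (with index $\geq 1$), so it converges to $p$'', but $m_k=n_{k+1}-n_k$ need not be increasing or tend to infinity, so this is not literally a subsequence and the convergence to $p$ does not follow automatically. The repair is immediate: if $(m_k)$ were bounded, some value $m\geq 1$ would occur infinitely often, forcing $g^{m}=\mathrm{id}_X$ and hence $g^{m}(a)=a$, contradicting strict monotonicity; thus $(m_k)$ is unbounded, and along a further subsequence with $m_k\to\infty$ your conclusion $g^{m_k}(a)\to p$ is valid. Alternatively, just observe that every $g^{m_k}(a)$ lies in the closed half-arc $\{z\in xy:\ z\geq g(a)\}$ in the chosen order, so the limit $a$ of $g^{m_k}(a)$ cannot satisfy $a<g(a)$.
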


\begin{proof}
Suppose for simplicity that the sequence $a, g(a), g^2(a), \dots$ is increasing and denote by $s\in xy$ its supremum. 
Suppose for contradiction that $g$ is contained in a compact group. Then there is some cluster point $h$ of the sequence $g, g^2, \dots$. So there is an increasing sequence of positive integers $(n_i)$ such that $g^{n_i}$ converges uniformly to $h$. 
It follows that $h(a)=\lim g^{n_i}(a)=s$ and also that
\[h(s)=\lim_{i\to\infty} \left(g^{n_i}\left(\lim_{j\to\infty} g^j(a)\right)\right)=\lim_{i\to\infty}\lim_{j\to\infty} g^{n_i+j}(a)=s.\]

 Now $h(a)=s$ and $h(s)=s$ which is a contradiction since $a\neq s$ and $h$ is a bijection.
\end{proof}

\begin{lemma}\label{fixcon}
Let $X$ be a uniquely arcwise connected continuum and let $g$ be a homeomorphism of $X$ which is contained in a compact subgroup of the group of all homeomorphisms. Then $\fix g$ is arcwise connected.
\end{lemma}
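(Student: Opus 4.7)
The plan is to show directly that for any two points $p,q\in\fix g$ the whole arc $pq$ lies in $\fix g$, so that $\fix g$ is arcwise connected (it is automatically nonempty, e.g.\ by the continuum fixed point results cited in the introduction, or because the plan below is vacuous if $\fix g=\emptyset$; but this will not matter for the statement, which is about connectedness of $\fix g$).

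Fix $p,q\in\fix g$ and consider the unique arc $pq\subseteq X$. First I would observe that $g(pq)$ is an arc with endpoints $g(p)=p$ and $g(q)=q$, so by unique arcwise connectedness $g(pq)=pq$. Thus $g|_{pq}$ is a self-homeomorphism of the arc $pq$ fixing both endpoints, hence increasing with respect to a natural order on $pq$ (an endpoint-swapping homeomorphism of an arc would have to interchange $p$ and $q$).

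Now suppose towards a contradiction that some $a\in pq$ satisfies $g(a)\neq a$. Replacing $g$ by $g^{-1}$ if necessary (which is still in the same compact subgroup), assume $a<g(a)$ in the natural order on $pq$. Since $g|_{pq}$ is increasing, applying $g$ repeatedly gives
\[
a<g(a)<g^2(a)<\cdots ,
\]
a strictly monotone sequence in the arc $pq$. This is exactly the hypothesis of Lemma \ref{monseq}, whose conclusion is that $g$ is not contained in any compact subgroup of the homeomorphism group—contradicting the assumption on $g$.

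Hence every $a\in pq$ is fixed by $g$, so $pq\subseteq\fix g$, which gives arcwise connectedness of $\fix g$. I do not expect any real obstacle: the only slightly subtle points are verifying $g(pq)=pq$ (immediate from unique arcwise connectedness) and justifying that $g|_{pq}$ must be order-preserving rather than order-reversing, both of which are routine once one has set up the correct natural order on the arc.
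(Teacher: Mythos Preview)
Your proof is correct and follows essentially the same approach as the paper: reduce to $g$ restricted to an arc between two fixed points, observe this restriction is an order-preserving self-homeomorphism, and produce a strictly monotone orbit contradicting Lemma~\ref{monseq}. The only cosmetic difference is that the paper first passes (using closedness of $\fix g$) to points $x,y\in\fix g$ whose open arc contains no fixed points, whereas you work with arbitrary $p,q\in\fix g$ and a single non-fixed point $a$; both variants yield the same contradiction.
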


\begin{proof}
Suppose not.
Since moreover $\fix g$ is closed, we can find two distinct points $x,y\in X$ such that the open arc between the points $x$ and $y$ does not contain any fixed points. Since $g(x)=x$ and $g(y)=y$ and $X$ is uniquely arcwise connected it follows that $g$ maps the arc $xy$ onto itself. Take any point $z$ in the the open arc $xy$. Clearly $g(z)\neq z$ so either $g(z)<z$ or $g(z)>z$ with respect to a natural linear order on $xy$. In any case the sequence $z, g(z), g^2(z), \dots$ contradicts Lemma~\ref{monseq}.
\end{proof}

The following lemma is an analogue to the result of Helly for convex subsets of $\R^m$ (see e.g. \cite{rabin}).

\begin{lemma}\label{Helly}
Let $X$ be a uniquely arcwise connected continuum and let $S_1,\dots, S_n$ be its arcwise connected subsets such that $S_i\cap S_j\neq\emptyset$. Then $\bigcap S_i\neq\emptyset$.
\end{lemma}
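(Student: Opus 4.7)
The plan is induction on $n$, with $n=2$ immediate and the crux in the case $n=3$. Before the induction, I would record a convexity-type observation: if $S\subseteq X$ is arcwise connected and $a,b\in S$, then the unique arc $[a,b]$ of $X$ is already contained in $S$, since any arc in $S$ joining $a$ to $b$ is also an arc in $X$ and hence must coincide with $[a,b]$ by unique arcwise connectedness. A direct consequence is that the intersection of arcwise connected subsets of $X$ is itself arcwise connected.

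For $n=3$, I would pick $p_{ij}\in S_i\cap S_j$ for $\{i,j\}\subseteq\{1,2,3\}$. By the observation, $[p_{12},p_{13}]\subseteq S_1$, $[p_{12},p_{23}]\subseteq S_2$, and $[p_{13},p_{23}]\subseteq S_3$, so it suffices to produce a \emph{median} of the three points, i.e.\ a point lying on all three arcs. Parameterize $[p_{12},p_{13}]$ as an embedding $f\colon[0,1]\to X$ with $f(0)=p_{12}$ and set $s^*=\sup\{t\in[0,1]:f(t)\in[p_{12},p_{23}]\}$. Uniqueness of arcs forces this set to be an initial segment: whenever $f(s)\in[p_{12},p_{23}]$, the two arcs from $p_{12}$ to $f(s)$ (namely $f([0,s])$ and a sub-arc of $[p_{12},p_{23}]$) must coincide as sets, so $f([0,s])\subseteq[p_{12},p_{23}]$. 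Put $m:=f(s^*)$; then $[p_{12},p_{13}]$ and $[p_{12},p_{23}]$ agree exactly up to $m$ and then separate, whence the concatenation $[m,p_{13}]\cup[m,p_{23}]$ is an arc from $p_{13}$ to $p_{23}$ and, by uniqueness, equals $[p_{13},p_{23}]$. Thus $m\in[p_{13},p_{23}]$ as well, giving $m\in S_1\cap S_2\cap S_3$.

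For $n\geq 4$, I would apply the inductive hypothesis to the $n-1$ sets $S_1\cap S_2,\dots,S_1\cap S_n$, which are arcwise connected by the preliminary observation. Their pairwise intersections are $S_1\cap S_i\cap S_j$, nonempty by the $n=3$ case applied to $S_1,S_i,S_j$; the hypothesis therefore yields a common point, lying in $\bigcap_{i=2}^n(S_1\cap S_i)=\bigcap_{i=1}^n S_i$. The main obstacle is the median construction in the base case; everything there reduces to the principle that, in a uniquely arcwise connected continuum, two arcs sharing an endpoint meet in an initial sub-arc, which itself follows directly from unique arcwise connectedness.
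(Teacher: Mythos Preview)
Your argument is correct. The convexity observation (arcwise connected subsets of $X$ contain the unique arc between any two of their points) is sound, the median construction for $n=3$ goes through as written once the easy degenerate cases ($p_{13}=p_{23}$, or $m$ coinciding with one of the $p_{ij}$) are noted, and the inductive step is clean.

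The paper takes a different, more compressed route. Having chosen the witnesses $s_{i,j}\in S_i\cap S_j$, it passes to the \emph{finite tree} $T\subseteq X$ spanned by all the $s_{i,j}$ and replaces each $S_i$ by $S_i\cap T$; these are connected subtrees of $T$ by exactly the convexity observation you isolate, and the pairwise intersections are still nonempty. The problem is thereby reduced to the classical Helly property for subtrees of a finite tree, which the paper simply declares to be easy. Your proof, by contrast, is self-contained: the $n=3$ median construction is precisely the content one would supply to justify that ``easy'' step, and your induction avoids naming the auxiliary tree $T$ altogether. The paper's approach is shorter on the page because it outsources the combinatorial core; yours makes that core explicit.
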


\begin{proof}
For every pair $i, j\leq n$ choose a point $s_{i,j}\in S_i\cap S_j$. Denote by $T$ the smallest tree which contains all the points $s_{i, j}$ for $i, j\leq n$. Since the sets $S_i\cap T$ are connected subsets of $T$ we can replace $X$ by $T$ and $S_i$ by $S_i\cap T$. The conclusion follows easily.
\end{proof}

\begin{theorem}\label{compactgroup}
Let $G$ be a group every element of which is contained in a compact subgroup. Then every continuous action of $G$ on a uniquely arcwise connected continuum has a fixed point. Moreover the set of fixed points of the action is an arcwise connected continuum.
\end{theorem}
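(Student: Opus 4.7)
My plan is a Helly-type reduction. For each $g\in G$ the fixed-point set $F_g=\fix g$ is nonempty (every homeomorphism of a uniquely arcwise connected continuum has a fixed point, as noted in the introduction), arcwise connected by Lemma~\ref{fixcon}, and closed. Once I show that pairwise intersections $F_{g_1}\cap F_{g_2}$ are nonempty, an inductive application of Lemma~\ref{Helly} combined with compactness of $X$ yields $F:=\bigcap_{g\in G}F_g\neq\emptyset$. Arcwise connectedness of $F$ is then automatic: for any $x,y\in F$, the unique arc joining them lies in each arcwise connected $F_g$, and hence in $F$; being closed and arcwise connected in the compact $X$, $F$ is the desired arcwise connected subcontinuum.

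The core step is to establish the pairwise intersection. I argue by contradiction: suppose $F_{g_1}\cap F_{g_2}=\emptyset$. Taking any arc from $F_{g_1}$ to $F_{g_2}$, let $p$ be its last point in $F_{g_1}$ and $q$ the first point in $F_{g_2}$ past $p$; then $p\in F_{g_1}$, $q\in F_{g_2}$, and the open arc $(p,q)$ is disjoint from $F_{g_1}\cup F_{g_2}$. I then examine $g_1(q)$ relative to $[p,q]$, noting $g_1(q)\neq p,q$. \textbf{Case (A):} if $g_1(q)\in(p,q)$, then $g_1$ restricts to a continuous injection of the parametrized arc $[p,q]\cong[0,1]$ fixing $0$ and sending $1$ to some $s\in(0,1)$; strict monotonicity then forces $\tilde g_1^n(1)$ to be strictly decreasing, so $q,g_1(q),g_1^2(q),\dots$ is strictly monotone in $[p,q]$, contradicting Lemma~\ref{monseq}. \textbf{Case (B):} if $q\in(p,g_1(q))$, then applying $g_1^{-1}$ gives $g_1^{-1}(q)\in(p,q)$, and Case~(A) applied to $g_1^{-1}$ (which lies in the same compact subgroup and has $F_{g_1^{-1}}=F_{g_1}$) yields the contradiction.

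\textbf{Case (C)}, where the arcs $[p,q]$ and $[p,g_1(q)]$ share only a proper initial segment $[p,m]$ with $m\notin\{q,g_1(q)\}$, is the main obstacle. My plan is first to run the symmetric case analysis on $g_2$ acting at $p$: if $g_2(p)\in(p,q)$ or $p\in(q,g_2(p))$, the same monotone-orbit argument for $g_2$ (or $g_2^{-1}$) again contradicts Lemma~\ref{monseq}. This leaves only the fully branching configuration in which all four iterates $g_1^{\pm 1}(q)$ and $g_2^{\pm 1}(p)$ lie outside $[p,q]$. Here I would track the successive arcs $[p,g_1^n(q)]$ together with their branch points $g_1^n(m)$; since $g_1$ sits in a compact subgroup, the orbit closure $\overline{\langle g_1\rangle\cdot q}$ is compact, and I expect that passing to a cluster point of $g_1^n(q)$ and exploiting the rigidity enforced by Lemma~\ref{monseq} will force some $g_1^n(q)$ to land back in $[p,q]$, reducing again to Case~(A). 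Making this last reduction precise is where the technical difficulty of Step~2 is expected to lie.
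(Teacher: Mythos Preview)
Your global strategy---reduce to pairwise intersections, then invoke Lemma~\ref{Helly} and compactness---matches the paper exactly, and your Cases~(A) and~(B) are correct. The final paragraph on arcwise connectedness of $\bigcap_g F_g$ is also fine.

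The gap is Case~(C), and it has two layers. First, you have not separated off the sub-case where the branch point $m$ lies in the open arc $(p,q)$. This can be eliminated directly: $g_1$ carries $[p,q]$ homeomorphically onto $[p,g_1(q)]=[p,m]\cup[m,g_1(q)]$, and since $m\notin\fix g_1$, one of $g_1(m)$, $g_1^{-1}(m)$ lands in $(p,m)$, yielding a monotone orbit in $[p,m]$ that contradicts Lemma~\ref{monseq}. (This is the paper's ``$g(y)\in W$'' case.) After the symmetric reduction for $g_2$, the only surviving configuration has the branch point for $g_1$ equal to $p$ and that for $g_2$ equal to $q$. Second---and this is the essential missing idea---your plan for that residual configuration, namely iterating $g_1$ alone and hoping compactness forces some $g_1^{n}(q)$ back into $[p,q]$, has no reason to work: here $g_1$ sends the entire $q$-side across $p$, and nothing about the compact subgroup containing $g_1$ controls where the orbit of $q$ sits relative to $[p,q]$. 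The paper proceeds very differently. From $g_1(V\cup W)\subseteq U$ and $g_2(U\cup W)\subseteq V$ one sees that the \emph{composition} $g_2g_1$ (which, as an element of $G$, also lies in a compact subgroup) has an orbit $(g_2g_1)^n(q)$ that is linearly ordered in the arc-order of $X$. If this orbit lies in a single arc, Lemma~\ref{monseq} applied to $g_2g_1$ finishes. If not, one invokes the Krylov--Bogolioubov theorem to get a $(g_2g_1)$-invariant Borel probability measure and, following \cite{mohler}, decomposes $X$ into countably many disjoint universally measurable sets permuted by $g_2g_1$, forcing total measure $0$ or $\infty$. Neither the passage to the composed map $g_2g_1$ nor the invariant-measure argument appears in your sketch, and both are needed; your own closing sentence already signals that the argument is incomplete at exactly this point.
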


\begin{proof}
We want to prove that $\bigcap \{\fix g\colon g\in G \}$ is nonempty.
It suffices to show that the system $\{\fix g\colon g\in G \}$ has the finite intersection property.
First let us take $g, h\in G$ and suppose first that $\fix g\cap \fix h$ is empty.

It follows by \cite{mohler} that $\fix g$ and $\fix h$ are nonempty. Since $X$ is arcwise connected and the sets $\fix g$ and $\fix h$ are disjoint and closed, there are points $x\in \fix g$ and $y\in \fix h$ such that the open arc $xy$ intersects neither $\fix g$ nor $\fix h$.

For any $z\in X$ denote by $t(z)$ the unique point in the closed arc $xy$ such that the closed arc $zt(z)$ intersects $xy$ in just one point $t(z)$.
Moreover denote $U=\{z\in X\colon t(z)=x\}$, $V=\{z\in X\colon t(z)=y\}$, $W=\{z\in X\colon t(z)\neq x, t(z)\neq y\}$. In order to remember this notation see Figure~\ref{obr}.

Let us consider several possible cases. If $g(y)\in V$ then the sequence $y, g^{-1}y, \dots$ contained in the arc $xg(y)$ gives a contradiction with Lemma~\ref{monseq}.

If $g(y)\in W$ consider the only point $w\in xy$ such that $wg(y)\cap xy=\{w\}$. Since the arc $xy$ is mapped by $g$ to $xg(y)$ it follows that $g(w)$ is contained in $xg(y)$. Moreover $w\notin\fix g$ so either $g(w)$ or $g^{-1}(w)$ is in the open arc $xw$. We get a contradiction with Lemma~\ref{monseq} using the sequence $g(w), g^2(w), \dots$ in the first case and using the sequence $g^{-1}(w), g^{-2}(w)$ in the second case.

So by symmetry we do not need to deal with the cases when $h(x)\in U$ and $h(x)\in W$.
So the remaining case satisfies $g(y)\in U$ and $h(x)\in V$. 
It follows that $g(V\cup W)\subseteq U$ and $h(U\cup W)\subseteq V$.
Denote the elements of the orbit of $y$ by $y_{n}=(hg)^n(y_n)$ for $n\in\Z$. We are going to show that every finite subset of the orbit of $y$ with respect to $hg$ is contained in an arc and moreover the order given by this arc corresponds to the order of these points given by their indeces. In order to simplify the proof let us write $\left<a, b, c\right>$ instead of $b\in ac$ which means that the points $a,b,c$ lie on some arc in this order. Since $g(y)\in U$ it follows that $hg(y)\in V$. Hence $\left<x, y_0, y_1\right>$. By applying the homeomorphism $g$ and noting that $g(x)=x$ we get $\left<x, g(y_0), g(y_1)\right>$ and since $g(y_0), g(y_1)\in U$ it follows that also $\left<y_0, g(y_0), g(y_1)\right>$. By applying $h$ we get $\left<y_0, y_1, y_2\right>$. Since $hg$ is a homeomorphism we get that $\left<y_n, y_{n+1}, y_{n+2}\right>$ for every $n\in\Z$.

Let us denote $R^+=\bigcup_{n=0}^\infty y_0y_n$, $R^-=\bigcup_{n=0}^{-\infty} y_0y_n$ and $R=R^+\cup R^-$ and consider two subcases. The first one is that either $R^+$ or $R^-$ is contained in an arc.
Then we obtain again a contradiction with Lemma~\ref{monseq}. 
In the second case  neither $R^+$ nor $R^-$ is contained in an arc. In this part of the proof we follow the ideas of \cite{mohler}. By the Krylov-Bogolioubov theorem \cite[p. 152]{Walters} we can consider a $hg$-invariant probability measure $\mu$ on $X$. 
We claim that for every $z\in X$ there is a unique point $r(z)\in R$ such that $zr(z)\cap R$ is just a one point set. Indeed, the arc $y_0z$ contains only finitely many points from $\{y_n\colon n\in\Z\}$ (otherwise $R^+$ or $R^-$ would be contained in an arc) and thus there is some $n\in\N$ such that $R\cap y_0z \subseteq y_{-n}y_n$. We set $r(z)$ to be the only point for which $zr(z)\cap y_{-n}y_n=\{r(z)\}$.

Let $A_n=\{z\in X\colon r(z)\in y_ny_{n+1}, r(z)\neq y_{n}\}$.
For every $n\in\Z$ the set $A_n$ is analytic by \cite{mohler} and hence universally measurable \cite[Theorem 21.10]{Kechris}. Moreover $hg(A_n)=A_{n+1}$ and thus $\mu(A_n)=\mu(A_{n+1})$. It follows that the space $X$ is disjointly decomposed into the sets $A_n, n\in\Z$, all of them having the same measure. Hence $\mu(X)$ is either zero if $\mu(A_0)=0$ of infinite if $\mu(A_0)>0$. This is impossible since $\mu$ is a probability measure.

In every case we achieved a contradiction so it follows that $\fix g\cap \fix h$ is nonempty.

Now, suppose that $g_1,\dots, g_n\in G$ and consider the corresponding sets of fixed points $\fix g_1, \dots, \fix g_n$. By Lemma~\ref{fixcon} the sets $\fix g_i$ are arcwise connected and hence we can use Lemma~\ref{Helly} to obtain that $\bigcap \fix g_i$ is nonempty. Hence by compactness
$\bigcap_{g\in G} \fix g$ is nonempty. Thus there is a fixed point of the action of $G$ on $X$.

Since all the sets $\fix g$ are arcwise connected and $X$ is uniquely arcwise connected it follows that the set $\bigcap_{g\in G} \fix g$ is arcwise connected as well.
\end{proof}

\begin{figure}[h]
\definecolor{uuuuuu}{rgb}{0.27,0.27,0.27}
\begin{tikzpicture}[line cap=round,line join=round,>=triangle 45,x=1.0cm,y=1.0cm]
\clip(-4.3,-2.1) rectangle (9.1,2.1);
\draw [line width=1.2pt] (0.,0.)-- (5.,0.);
\draw [line width=0.8pt,dash pattern=on 2pt off 2pt] (7.,0.) circle (2.cm);
\draw [line width=0.8pt,dash pattern=on 2pt off 2pt] (-2.,0.) circle (2.cm);
\draw [rotate around={0.:(2.5,0.)},line width=0.8pt,dash pattern=on 2pt off 2pt] (2.5,0.) ellipse (2.5cm and 2.cm);
\draw [line width=1.2pt] (5.,0.)-- (7.,1.);
\draw [line width=1.2pt] (5.,0.)-- (8.,0.);
\draw [line width=1.2pt] (5.,0.)-- (7.,-1.);
\draw [line width=1.2pt] (0.,0.)-- (-2.,1.);
\draw [line width=1.2pt] (0.,0.)-- (-3.,0.);
\draw [line width=1.2pt] (0.,0.)-- (-2.,-1.);
\draw [line width=1.2pt] (2.,0.)-- (2.,1.);
\draw [line width=1.2pt] (3.,0.)-- (3.,-1.);

\draw [fill=black] (0.,0.) circle (2.5pt);
\draw [fill=black] (5.,0.) circle (2.5pt);
\draw[color=black] (0.3,0.3) node {$x$};
\draw[color=black] (4.7,0.3) node {$y$};
\draw[color=black] (6.2,1.5) node {$V$};
\draw[color=black] (-2.8,1.5) node {$U$};
\draw[color=black] (1.5,1.5) node {$W$};
\end{tikzpicture}
\caption{Uniquely arcwise connected continuum $X$}\label{obr}
\end{figure}
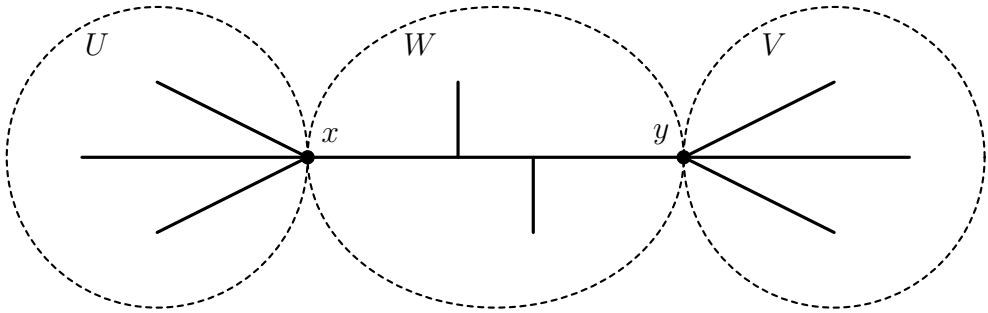

We can easily derive the subsequent two corollaries of Theorem~\ref{compactgroup}.

\begin{corollary}\label{compact}
Every compact group action on a uniquely arcwise connected continuum has a fixed point.
\end{corollary}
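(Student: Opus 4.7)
The plan is to invoke Theorem \ref{compactgroup} directly. Its hypothesis requires that every element of the acting group lie in a compact subgroup; for a compact group $G$, this is trivial since $G$ itself serves as such a subgroup containing any given element. So the whole corollary should fall out in a few lines.

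The one subtlety to address is the matching of notions: Lemma \ref{monseq} and Lemma \ref{fixcon}, which power the proof of Theorem \ref{compactgroup}, speak of compact subgroups of the homeomorphism group of $X$ rather than of the abstract group $G$. To bridge this, I would first observe that, since the action $G \times X \to X$ is continuous and each $g \in G$ has the continuous inverse induced by $g^{-1}$, every $g$ acts on $X$ as a homeomorphism. Then the assignment $g \mapsto (x \mapsto g\cdot x)$ defines a homomorphism $\Phi \colon G \to \mathrm{Homeo}(X)$, continuous with respect to the uniform (equivalently, compact-open) topology on $\mathrm{Homeo}(X)$, which is a topological group because $X$ is a compact metric continuum.

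Since $G$ is compact, $\Phi(G)$ is a compact subgroup of $\mathrm{Homeo}(X)$, and it contains $\Phi(g)$ for every $g \in G$. Thus the hypothesis of Theorem \ref{compactgroup} is satisfied in the sense needed by its proof, and we conclude that the fixed-point set $\bigcap_{g \in G} \fix g$ is nonempty (in fact an arcwise connected continuum).

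The main obstacle here is really only notational: translating "$G$ compact" into "each $g$ lies in a compact subgroup of $\mathrm{Homeo}(X)$." Once the continuous homomorphism $\Phi$ is written down, nothing more is required, and the corollary is a one-line consequence of Theorem \ref{compactgroup}.
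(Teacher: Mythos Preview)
Your proposal is correct and matches the paper's approach: the corollary is stated there without proof as an immediate consequence of Theorem~\ref{compactgroup}. Your explicit passage through the continuous homomorphism $\Phi\colon G\to\mathrm{Homeo}(X)$ to convert abstract compactness of $G$ into compactness inside $\mathrm{Homeo}(X)$ is a welcome clarification of a point the paper leaves implicit.
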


\begin{corollary}\label{torsion}
Every torsion group action on a uniquely arcwise connected continuum has a fixed point.
\end{corollary}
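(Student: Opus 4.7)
The plan is to derive this as an immediate consequence of Theorem~\ref{compactgroup}. Recall that a torsion group is one in which every element has finite order. I would begin by noting that if $g\in G$ has order $n$, then the cyclic subgroup $\langle g\rangle=\{e,g,g^2,\dots,g^{n-1}\}$ is finite, hence a compact subgroup of $G$ containing $g$. Thus every element of $G$ lies in a compact subgroup, which is precisely the hypothesis of Theorem~\ref{compactgroup}.

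Applying that theorem to any continuous action of $G$ on a uniquely arcwise connected continuum $X$ then yields, with no further work, that the action has a fixed point (and in fact that $\bigcap_{g\in G}\fix g$ is an arcwise connected continuum). Since the reduction is a one-line argument, there is no genuine obstacle here; all the difficulty has been absorbed into the proof of Theorem~\ref{compactgroup}. The only remark worth making is that the phrase ``compact subgroup'' causes no issue, because a finite group is trivially compact in the discrete topology, so one does not need to specify a topology on $G$ for the hypothesis of Theorem~\ref{compactgroup} to apply.
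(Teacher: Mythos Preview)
Your proposal is correct and matches the paper's approach: the paper simply states that Corollary~\ref{torsion} follows easily from Theorem~\ref{compactgroup}, and your one-line reduction via the finite cyclic subgroup $\langle g\rangle$ is exactly the intended argument. The only minor imprecision is that the relevant compactness in Theorem~\ref{compactgroup} (via Lemma~\ref{monseq}) is in the homeomorphism group of $X$ under uniform convergence rather than in $G$ with the discrete topology, but since a finite set of homeomorphisms is compact in any Hausdorff topology this makes no difference.
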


The following lemma was proved in \cite{mitchell}. We are using some more suitable notation for our purposes. It will allow us to pass from a semigroup action to a group action. Recall that a semigroup $G$ is said to be \emph{left reversible} if for every pair $a, b\in G$ there exists $c, d\in G$ such that $ac=bd$. Immediately one gets that commutative semigroups are left reversible.

\begin{lemma}\label{mitch}
Let $X$ be a compact Hausdorff space and let $G$ be a left reversible and equicontinuous semigroup of selfmaps of $X$. Then there is a retract $Y$ of $X$ and a compact subgroup $H$ of the closure of $G$ such that $H$ restricted to $Y$ is a group of homeomorphisms of $Y$ onto itself. Further the elements of $G$ have a common fixed point $p\in X$ if and only if $p\in Y$ and $p$ is a common fixed point of $H$.
\end{lemma}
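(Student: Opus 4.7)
The plan is to transfer the problem from $G$ to the compact topological semigroup $S:=\overline G$ obtained as the closure of $G$ in $X^X$ with the topology of pointwise convergence, and to extract $Y$ and $H$ from the minimal-ideal structure of $S$.

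First I would check that $S$ is a compact topological semigroup. Equicontinuity of $G$ together with compactness of $X$ gives, by Arzel\`a--Ascoli, that $S$ is compact and equicontinuous, and that pointwise and uniform convergence coincide on $S$. Equicontinuity then makes composition jointly continuous on $S\times S$, so $S$ is a compact topological semigroup. A standard limit argument shows that left reversibility of $G$ passes to $S$: given $a,b\in S$, approximate by nets in $G$, apply left reversibility in $G$, and extract cluster points.

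Next I would extract the algebraic objects from $S$. By the Ellis--Numakura lemma applied to a minimal left ideal, $S$ contains an idempotent $e$ for which $H:=eSe$ is a compact topological group with identity $e$. I set $Y:=e(X)$; since $e^2=e$, the map $e$ restricts to the identity on $Y$, so $Y$ is a retract of $X$. Every $h\in H$ satisfies $h=ehe$ and therefore sends $X$ into $e(X)=Y$; since $H$ is a group, each $h|_Y$ is a homeomorphism of $Y$ onto itself, with inverse $(h^{-1})|_Y$.

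Finally I would establish the biconditional for common fixed points. The forward direction is immediate from continuity of evaluation at the point: a common $G$-fixed point is fixed by the pointwise closure $\overline G=S$, hence by $e$ (placing it in $Y$) and by every element of $H\subseteq S$. For the reverse direction, given $p\in Y$ fixed by $H$ and given $g\in G$, the element $ege$ lies in $H$, so
\[
e(g(p))=(ege)(p)=p,
\]
placing $g(p)$ in the fiber $e^{-1}(p)$. The upgrade from $e(g(p))=p$ to $g(p)=p$ is where left reversibility enters in an essential way, and I expect this to be the main obstacle: one has to exploit reversibility to show that on the minimal $S$-invariant closed subset through $p$ the equicontinuous action of $S$ collapses onto the group action of $H$, forcing $g$ itself to act through $H$ and therefore to fix $p$. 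The remaining work is routine applications of Arzel\`a--Ascoli on the analytic side and of the classical structure theory of compact topological semigroups on the algebraic side.
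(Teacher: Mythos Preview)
The paper does not give a proof of this lemma; it is quoted from \cite{mitchell}. Your outline is essentially Mitchell's argument: pass to the compact topological semigroup $S=\overline G$ (Arzel\`a--Ascoli plus equicontinuity making composition jointly continuous), pick a minimal idempotent $e$, and set $H=eSe$, $Y=e(X)$. The forward implication and the verification that $H|_Y$ is a compact group of homeomorphisms of the retract $Y$ are correct as you describe.

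The one genuine gap is the reverse implication, which you flag yourself. Your proposed route through ``the minimal $S$-invariant closed subset through $p$'' is vaguer than necessary and does not obviously reach the conclusion. The missing algebraic fact is this: left reversibility of $S$ forces $S$ to have a \emph{unique} minimal right ideal, which therefore coincides with the kernel $K(S)$; for the idempotent $e\in K(S)$ one then has $eS=K(S)$. Since $K(S)$ is a two-sided ideal, $se\in K(S)=eS$ for every $s\in S$, and multiplying on the left by $e$ yields $se=ese$ for all $s\in S$. With this identity the gap closes in one line: if $p\in Y$ is fixed by $H$ then $p=e(p)$, so for any $g\in G$
\[
g(p)=(ge)(p)=(ege)(p)=p
\]
because $ege\in H$. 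No separate dynamical argument on orbit closures is required; the entire role of left reversibility is precisely to produce the relation $se=ese$.
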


\begin{theorem}\label{ccsemigroup}
Let $G$ be a compact commutative semigroup. Then every continuous action of $G$ on a uniquely arcwise connected continuum has a fixed point.
\end{theorem}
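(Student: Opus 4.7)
The plan is to reduce the theorem to Corollary~\ref{compact} by passing through Mitchell's Lemma~\ref{mitch}. The abstract semigroup $G$ acts by continuous maps on $X$; let $\widetilde G$ denote the image of $G$ in $C(X,X)$ under $g\mapsto (x\mapsto gx)$. A common fixed point of the action of $G$ is exactly a common fixed point of the selfmaps in $\widetilde G$, so it suffices to produce the latter.

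First I would verify the hypotheses of Lemma~\ref{mitch} for $\widetilde G$. Since $X$ is a (metric) continuum and the action $G\times X\to X$ is continuous with $G$ compact, the associated map $G\to C(X,X)$ is continuous for the uniform topology, so $\widetilde G$ is compact in that topology and hence equicontinuous by the Arzel\`a--Ascoli theorem. Commutativity of $G$ passes to $\widetilde G$, making it left reversible. Applying Lemma~\ref{mitch} to $\widetilde G$ yields a retract $Y\subseteq X$ and a compact subgroup $H$ of the closure of $\widetilde G$ such that $H|_Y$ is a group of homeomorphisms of $Y$, with the additional property that a point of $X$ is a common fixed point of $\widetilde G$ on $X$ if and only if it lies in $Y$ and is fixed by every element of $H$.

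Next I would show that $Y$ is itself a uniquely arcwise connected continuum, so that Corollary~\ref{compact} applies to the compact group of homeomorphisms $H|_Y$. Being the continuous image of $X$ under a retraction $r\colon X\to Y$, the set $Y$ is a subcontinuum of $X$ (and we may dispose of the degenerate case $|Y|=1$ immediately). Given two points $p,q\in Y$, the retraction maps the unique arc of $X$ from $p$ to $q$ to a path in $Y$ which contains an arc from $p$ to $q$, giving arcwise connectedness; and any two distinct arcs in $Y$ would already be two distinct arcs in $X$, contradicting unique arcwise connectedness of $X$.

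With $H|_Y$ now a compact group acting on a uniquely arcwise connected continuum, Corollary~\ref{compact} furnishes a common fixed point of $H$ on $Y$, which by the last clause of Lemma~\ref{mitch} is a common fixed point of the original action of $G$ on $X$. The only genuinely technical points are verifying equicontinuity of $\widetilde G$ and the transfer of unique arcwise connectedness to the retract $Y$; both are straightforward once the correct framing through Mitchell's lemma is chosen, and this choice is the main conceptual step.
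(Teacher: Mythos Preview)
Your proof is correct and follows the same overall architecture as the paper: pass to the induced semigroup of selfmaps, verify equicontinuity and left reversibility, apply Mitchell's Lemma~\ref{mitch} to obtain a retract $Y$ and a compact group $H$ of homeomorphisms of $Y$, observe that $Y$ is again uniquely arcwise connected, and then invoke a fixed-point theorem for the group action of $H$ on $Y$.

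The one point of divergence is the final step. You appeal to Corollary~\ref{compact}, using that $H$ is \emph{compact}; the paper instead notes that $H$ is commutative and hence nilpotent, and appeals to the result of Shi and Sun \cite{shi}. Both are valid: Mitchell's lemma guarantees that $H$ is compact, and commutativity of $G$ passes to $H$, so either hypothesis is available. Your choice has the advantage of keeping the argument internal to the paper rather than relying on an external reference, and it makes no use of the commutativity of $H$ beyond what was needed for left reversibility. The paper's choice, on the other hand, emphasizes the parallel with \cite{shi} and would still work if Mitchell's lemma only delivered a (not necessarily compact) abelian group.
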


\begin{proof}
We can suppose that $G$ is a semigroup of selfmaps of a uniquely arcwise connected continuum $X$. Since $G$ is compact, it is equicontinuous and since $G$ is commutative, it is left reversible.
Hence we can apply Lemma~\ref{mitch} to get a retract $Y$ of $X$ and a subgroup $H$ of $G$ such that $H$ restricted to $Y$ is a group of homeomorphism of $Y$ and the action of $H$ on $Y$ has a fixed point if and only if the action of $G$ on $X$ has a fixed point.

Clearly $Y$ is a uniquely arcwise connected continuum since it is a retract of a uniquely arcwise connected continuum. Since $G$ is commutative the group $H$ is commutative and in particular nilpotent. Thus, by using \cite{shi} we conclude that the action of $H$ on $Y$ has a fixed point. Hence, the action of $G$ on $X$ has a fixed point.
\end{proof}

\begin{theorem}\label{ccsemigrouptree}
Let $G$ be a compact commutative semigroup. Then every continuous action of $G$ on a tree-like continuum has a fixed point.
\end{theorem}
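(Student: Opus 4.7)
The plan is to mimic the proof of Theorem \ref{ccsemigroup} almost verbatim, replacing the appeal to \cite{shi} with the appeal to \cite{FugateMcLean}. We may assume $G$ is a compact commutative semigroup acting on a tree-like continuum $X$, and regard $G$ as a semigroup of selfmaps of $X$. Compactness of $G$ yields equicontinuity, while commutativity gives left reversibility, so Lemma~\ref{mitch} applies. This produces a retract $Y$ of $X$ and a compact subgroup $H$ of the closure of $G$ such that $H$ acts on $Y$ by homeomorphisms, with the fixed-point problem for $G$ on $X$ reducing to the fixed-point problem for $H$ on $Y$.

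The key structural step is then to check that $Y$ is itself tree-like. Since $Y$ is the image of the continuous retraction of the connected compact Hausdorff space $X$, it is a subcontinuum of $X$, and I would argue that subcontinua of a tree-like continuum are tree-like: given $\varepsilon>0$, take an $\varepsilon$-map $f\colon X\to T$ to a tree $T$; the restriction $f|_Y\colon Y\to f(Y)$ is an $\varepsilon$-map onto the subcontinuum $f(Y)$ of $T$, and a subcontinuum of a tree is again a tree. (Alternatively, one may simply quote the classical fact that tree-likeness passes to subcontinua.) Thus $Y$ inherits tree-likeness from $X$.

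Finally, $H$ is a compact group acting continuously by homeomorphisms on the tree-like continuum $Y$, so the main result of \cite{FugateMcLean} supplies a common fixed point for $H$ on $Y$. By the equivalence in Lemma~\ref{mitch}, this point is also a common fixed point for the original action of $G$ on $X$.

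The argument is essentially routine given the machinery already assembled; the only step requiring a moment's thought is the tree-likeness of the retract $Y$, which I expect to be the main (though minor) obstacle, but it is handled by the standard $\varepsilon$-map characterization of tree-like continua applied to the restriction to $Y$.
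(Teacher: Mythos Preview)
Your proof is correct and follows essentially the same route as the paper: apply Lemma~\ref{mitch} to pass to a compact group $H$ acting on a retract $Y$, observe that $Y$ is tree-like, and invoke \cite{FugateMcLean}. The only difference is that where the paper simply asserts ``$Y$ is a tree-like continuum since it is a retract of a tree-like continuum'', you spell this out via the $\varepsilon$-map characterization and the fact that subcontinua of tree-like continua are tree-like; this is a correct elaboration of the same step.
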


\begin{proof}
We can follow similar concept as in the proof of Theorem~\ref{ccsemigroup}.
We apply Lemma~\ref{mitch} to get a retract $Y$ of $X$ and a compact subgroup $H$ of $G$ such that $H$ restricted to $Y$ is a group of homeomorphism of $Y$ and the action of $H$ on $Y$ has a fixed point if and only if the action of $G$ on $X$ has a fixed point.

Clearly $Y$ is a tree-like continuum since it is a retract of a tree-like continuum. Thus, by \cite{FugateMcLean} the action of $H$ on $Y$ has a fixed point. Hence, the action of $G$ on $X$ has a fixed point.
\end{proof}

We note that neither commutativity nor compactness can be omitted in Theorem~\ref{ccsemigroup} and Theorem~\ref{ccsemigrouptree}. We conclude this paper by several notes that are related to Table~1 and which contain some known generalizations or ideas for a possible proof.

\begin{note}\label{amen}
Since compact as well as commutative group actions on dendroids have fixed points, one could suspect that a similar result could hold for amenable groups. However this is not true even for actions on dendrites. Since solvable groups are amenable we have by a note in \cite{shi} that there is an amenable group action on a dendrite (even on an arc) without a fixed point. On the other hand, it was proved that in the case of countable amenable group actions on a dendrite \cite{shiamen}, or even on a uniquely arcwise connected continuum \cite{ShiYeAmen} there is always a fixed point or a 2-periodic point.
\end{note}

\begin{note}\label{compactsemigroup}
Two different constant mappings of an interval clearly form a compact semigroup with no common fixed point.
\end{note}

\begin{note}\label{zobecneni}
Every commutative semigroup acting by monotone mappings on a $\lambda$-dendroid has a fixed point \cite{gray}. This was
generalized in the case of dendroids in such a way that if we have a semigroup of monotone mappings and one more mapping that is commuting with all the others, then there is a common fixed point for all the mappings under discussion  \cite{graysmith}.
\end{note}

\begin{note}\label{opentrees}
In \cite{Bugati} it was proved that commutative semigroup of open mappings of a tree always has a fixed point. A possible way for the generalization to dendrites could be the use of a description of open mappings obtained in \cite{openmaps}.
\end{note}



\section{Acknowledgements}
I am grateful to J. Boro\'nski and R. Ma\'nka for their comments to the first version of this paper.

\bibliographystyle{siam}
\bibliography{citace}

\begin{thebibliography}{10}

\bibitem{openmaps}
{\sc G.~Acosta, P.~Eslami, and L.~G. Oversteegen}, {\em On open maps between
  dendrites}, Houston J. Math., 33 (2007), pp.~753--770.

\bibitem{Bellamy}
{\sc D.~P. Bellamy}, {\em A tree-like continuum without the fixed-point
  property}, Houston J. Math., 6 (1980), pp.~1--13.

\bibitem{Bugati}
{\sc S.~A. Bogaty\u\i and O.~D. Frolkina}, {\em A common fixed point of
  commuting mappings of a tree}, Vestnik Moskov. Univ. Ser. I Mat. Mekh.,
  (2002), pp.~3--10, 69.

\bibitem{borsuk}
{\sc K.~Borsuk}, {\em A theorem on fixed points}, Bull. Acad. Polon. Sci. Cl.
  III., 2 (1954), pp.~17--20.

\bibitem{boyce}
{\sc W.~M. Boyce}, {\em Commuting functions with no common fixed point}, Trans.
  Amer. Math. Soc., 137 (1969), pp.~77--92.

\bibitem{Monod}
{\sc B.~{Duchesne} and N.~{Monod}}, {\em {Group actions on dendrites and
  curves}}, ArXiv e-prints,  (2016).

\bibitem{FugateMcLean}
{\sc J.~B. Fugate and T.~B. McLean}, {\em Compact groups of homeomorphisms on
  tree-like continua}, Trans. Amer. Math. Soc., 267 (1981), pp.~609--620.

\bibitem{FMnote}
{\sc J.~B. Fugate and L.~Mohler}, {\em A note on fixed points in tree-like
  continua}, Topology Proc., 2 (1977), pp.~457--460 (1978).

\bibitem{fugatemohler}
{\sc J.~B. Fugate and L.~Mohler}, {\em Fixed point theorems for arc-preserving
  mappings of uniquely arcwise-connected continua}, Proc. Amer. Math. Soc., 123
  (1995), pp.~3225--3231.

\bibitem{gray}
{\sc W.~J. Gray}, {\em A fixed-point theorem for commuting monotone functions},
  Canad. J. Math., 21 (1969), pp.~502--504.

\bibitem{graysmith}
{\sc W.~J. Gray and C.~M. Smith}, {\em Common fixed points of commuting
  mappings}, Proc. Amer. Math. Soc., 53 (1975), pp.~223--226.

\bibitem{huneke}
{\sc J.~P. Huneke}, {\em Two commuting continuous functions from the closed
  unit interval onto the closed unit interval without a common fixed point}, in
  Topological {D}ynamics ({S}ymposium, {C}olorado {S}tate {U}niv., {F}t.
  {C}ollins, {C}olo., 1967), Benjamin, New York, 1968, pp.~291--298.

\bibitem{isbell}
{\sc J.~R. Isbell}, {\em Research problems: Commuting wrappings of trees},
  Bull. Amer. Math. Soc., 63 (1957), p.~419.

\bibitem{Kechris}
{\sc A.~S. Kechris}, {\em Classical descriptive set theory}, vol.~156 of
  Graduate Texts in Mathematics, Springer-Verlag, New York, 1995.

\bibitem{manka}
{\sc R.~Ma\'nka}, {\em Association and fixed points}, Fund. Math., 91 (1976),
  pp.~105--121.

\bibitem{mcdowell}
{\sc E.~L. McDowell}, {\em Coincidence values of commuting functions}, Topology
  Proc., 34 (2009), pp.~365--384.

\bibitem{mitchell}
{\sc T.~Mitchell}, {\em Common fixed-points for equicontinuous semigroups of
  mappings}, Proc. Amer. Math. Soc., 33 (1972), pp.~146--150.

\bibitem{mohler}
{\sc L.~Mohler}, {\em The fixed point property for homeomorphisms of
  {$1$}-arcwise connected continua}, Proc. Amer. Math. Soc., 52 (1975),
  pp.~451--456.

\bibitem{MohlerOversteegen}
{\sc L.~Mohler and L.~G. Oversteegen}, {\em Open and monotone fixed point free
  maps on uniquely arcwise connected continua}, Proc. Amer. Math. Soc., 95
  (1985), pp.~476--482.

\bibitem{nadler}
{\sc S.~B. Nadler, Jr.}, {\em Continuum theory}, vol.~158 of Monographs and
  Textbooks in Pure and Applied Mathematics, Marcel Dekker, Inc., New York,
  1992.
\newblock An introduction.

\bibitem{rabin}
{\sc M.~Rabin}, {\em A note on {H}elly's theorem}, Pacific J. Math., 5 (1955),
  pp.~363--366.

\bibitem{shi}
{\sc E.~Shi and B.~Sun}, {\em Fixed point properties of nilpotent group actions
  on 1-arcwise connected continua}, Proc. Amer. Math. Soc., 137 (2009),
  pp.~771--775.

\bibitem{shiamen}
{\sc E.~Shi and X.~Ye}, {\em Periodic points for amenable group actions on
  dendrites}, Proc. Amer. Math. Soc., 145 (2017), pp.~177--184.

\bibitem{ShiYeAmen}
{\sc E.~{Shi} and X.~{Ye}}, {\em {Periodic points for amenable group actions on
  uniquely arcwise connected continua}}, ArXiv e-prints,  (2017).

\bibitem{sobolewski}
{\sc M.~a. Sobolewski}, {\em A weakly chainable uniquely arcwise connected
  continuum without the fixed point property}, Fund. Math., 228 (2015),
  pp.~81--86.

\bibitem{Walters}
{\sc P.~Walters}, {\em An introduction to ergodic theory}, vol.~79 of Graduate
  Texts in Mathematics, Springer-Verlag, New York-Berlin, 1982.

\bibitem{young}
{\sc G.~S. Young}, {\em Fixed-point theorems for arcwise connected continua},
  Proc. Amer. Math. Soc., 11 (1960), pp.~880--884.

\end{thebibliography}

\end{document}